\def\Bbb{\mathbb}
\def\Cal{\mathcal}
\def\Dt{\partial_t}
\def\eb{\varepsilon}
\def\R {\mathbb{R}}
\def\<{\left<}
\def\>{\right>}
\def\Dx{\Delta_x}
\def\({\left(}
\def\){\right)}
\newtheorem{proposition}{Proposition}[section]
\newtheorem{theorem}[proposition]{Theorem}
\newtheorem{corollary}[proposition]{Corollary}
\newtheorem{lemma}[proposition]{Lemma}
\theoremstyle{definition}
\newtheorem{definition}[proposition]{Definition}
\newtheorem{remark}[proposition]{Remark}
\numberwithin{equation}{section}
\def \no#1#2#3 {{\bf #1} (#3), #2.}
\def \eds#1#2#3 {#1, #2, #3.}
\title[Bi-Lipschitz Man\'e projectors for cGL equation] { Bi-Lipschitz Man\'e projectors and finite-dimensional
reduction for complex Ginzburg-Landau equation}
\author[Anna Kostianko]{Anna Kostianko${}^{1,2}$}
\address{${}^1$
University of Surrey, Department of Mathematics,
Guildford, GU2 7XH, United Kingdom.}
\address{${}^2$ \phantom{e}School of Mathematics and Statistics, Lanzhou University, Lanzhou  \\ 730000,
P.R. China}
\email{anna.kostianko@surrey.ac.uk}
\subjclass[2000]{35B40, 35B45}
\keywords{complex Ginzburg-Landau equation, Lipschitz Man\'e projectors, inertial manifolds, spatial averaging principle, large dispersion, temporal averaging}
\begin{document}
\begin{abstract} We present a new method of establishing the finite-dimen-\\sionality of limit dynamics (in terms of bi-Lipschitz Man\'e projectors) for semilinear parabolic systems with cross diffusion terms and illustrate it on the model example of 3D complex Ginzburg-Landau equation with periodic boundary conditions. The method combines the so-called spatial-averaging principle invented by Sell and Mallet-Paret with temporal averaging of rapid oscillations which come from cross-diffusion terms.
\end{abstract}
\maketitle
\tableofcontents
\section{Introduction}\label{s0}

 It is believed that the long-time dynamics generated by a dissipative PDE is effectively finite-dimensional,
  i.e., despite the infinite-dimensionality of the initial phase space, it can be governed by finitely
   many parameters (the so-called order parameters in the terminology of I. Prigogine) and the associated
   system of ODEs (the so-called inertial form (IF)) which describes the evolution of these order
    parameters. However, despite  many efforts made in this direction during the last 50 years (see \cite{BV,Tem1,SY,MZ,Rob2} and references therein), the precise mathematical meaning of this reduction remains unclear and requires further investigation.

 The most popular approach to study the dissipative dynamics is related to the concept of a global attractor which is by definition a compact invariant set attracting the images of all bounded sets of the phase space as time tends to infinity. Thus, on the one hand, the global attractor (if exists) captures all non-trivial dynamics of the system considered and, on the other hand, it is usually essentially smaller than the initial phase space $\Phi$  and this justifies the desired reduction of the number of degrees of freedom. Moreover, one of the main results of the attractors theory claims that, under relatively weak assumptions on the dissipative PDE considered, the global attractor exists and possesses finite Hausdorff and box-counting dimensions. In particular, it is true for 2D Navier-Stokes system, various types of reaction-diffusion equations, damped wave and Schr\"odinger equations, Ginzburg-Landau equations and many other important classes of PDEs, see \cite{BV,Tem1,Rob1}.

  If the finite-dimensional global attractor $\Cal A$ is constructed, then the Man\'e projection theorem ensures us that a projector $P$ to a generic fi\-ni\-te-di\-men\-sio\-nal plane of the phase space $\Phi$ of the problem considered is injective on $\Cal A$ if the dimension of the plane is large enough. Thus, the dynamical semigroup $S(t):\Cal A\to\Cal A$ generated by the considered PDE on the attractor is conjugated to the projected semigroup $\tilde S(t)=PS(t)P^{-1}$ acting on a finite-dimensional compact set $\tilde {\Cal A}:=P\Cal A$ and this gives us a {\it finite-dimensional} reduction. In addition, slightly more delicate arguments give us also the IF as a system of ODEs acting on this plane. Projectors which satisfy the injectivity property on the attractor are usually referred as Man\'e projectors, see \cite{Rob1,Rob2} for more details.
\par
   However, the described approach has essential drawbacks which prevent to consider it to be as a reasonable way to justify   the finite-dimensional reduction in dissipative PDEs. One of the key questions here is the smoothness of the obtained reduced semigroup $\tilde S(t)$ and the corresponding IF. It is well-known that in general the Man\'e projector can be chosen in such a way that $P^{-1}$ is H\"older continuous (in the case of abstract semilinear parabolic equations the H\"older exponent may be chosen arbitrarily close to one, see \cite{PR,Rob2,FO}). This leads to H\"older continuous reduced semigroups and IF with H\"older continuous vector fields. In contrast to this, Lipschitz (or even log-Lipschitz) continuity of inverse Man\'e projectors is much more delicate and in general the answer on the existence of Lipschitz Man\'e projectors is negative even in the class of abstract semilinear parabolic problems, see \cite{EKZ,Z} and references therein.
\par
Indeed, let us consider a semilinear parabolic equation of the form
\begin{equation}\label{0.abs}
\Dt u+Au=F(u)
\end{equation}
in a Hilbert space $H$. Here $A$ is a positive definite sectorial linear operator with compact
 inverse and $F:H\to H$ is a given non-linearity which is assumed to be bounded and at least Lipschitz
  continuous. Then, the existence of a compact global attractor $\Cal A$ with finite box-counting
  dimension is well-known, but there are examples where the attractor $\Cal A$ cannot be
   embedded into any Lipschitz or even log-Lipschitz finite-dimensional manifold and, by this reason,
    the Lipschitz or Log-Lipschitz Man\'e projections do not exist, see \cite{EKZ,Z}. The analogous
     examples have been recently constructed for the class of 1D reaction-diffusion-advection problems
   (with local non-linearities) as well, see \cite{AZ2}. In addition, the dynamics on the attractor in
    these examples has features which hardly can be interpreted as "finite-dimensional", e.g., limit
    cycles with super-exponential rate of attraction, travelling waves in Fourier space, etc.,
     see \cite{EKZ}. At the same time, the box-counting dimension in these examples remains finite
     (and is not very large) and H\"older continuous IF exists. Such examples indicate that the
     limit dissipative dynamics may be infinite dimensional despite the finiteness of box-counting dimension
      of the corresponding attractor and motivate the increasing interest to study alternative
       constructions for the finite-dimensional reduction which are not based on
       box-counting dimension and Man\'e projection theorem.
\par
An ideal situation is the case where the considered system possesses the so-called inertial manifold (IM) which is a finite-dimensional smooth (at least $C^1$) normally-hyperbolic invariant manifold in the phase space with a global attraction property. Then, the restriction of the equation to this manifold gives the desired IF and we also have that any trajectory of the initial system is attracted exponentially to some trajectory of the IF (the so-called asymptotic phase or exponential tracking property, see \cite{F,FST,Mi,Tem1,Z} and references therein). So, this construction gives natural and transparent finite-dimensional reduction for a number of important equations such as 1D reaction-diffusion equations, Swift-Hohenberg and Kuramoto-Sivashinsky equation, etc.
\par
The existence of such a manifold requires strong separation of slow and fast variables which is
usually formulated in terms of invariant cones, see \cite{Rom,Tem1,Z} and references therein. In turn,
in order to verify these conditions, the so-called spectral gap conditions (which are much easier to check)
 are usually exploited. For instance, in the case of equation \eqref{0.abs} with self-adjoint
  (or normal) operator $A$, the spectral gap condition for existence of $N$-dimensional IM
  reads
\begin{equation}\label{0.sg}
\lambda_{N+1}-\lambda_N>2L,
\end{equation}
where $\{\lambda_n\}_{n=1}^\infty$ are the eigenvalues of $A$ enumerated in the non-decreasing order
 and $L$ is a Lipschitz constant of the non-linearity $F$. However, these conditions are very restrictive,
  for instance, for the most natural case where $A$ is a Laplacian in a bounded domain, they are satisfied
   for general non-linearities  in 1D case only.
\par
It is also known that, in the case where $F$ is a general non-linearity, the spectral gap conditions \eqref{0.sg} are sharp in the sense that if they are violated for all $N$, we always can construct a nonlinearity $F$ such that \eqref{0.abs} will not possess any finite-dimensional IM, see \cite{EKZ,Z}. In contrast to this, for concrete particular classes of equations \eqref{0.abs} the IM may exist even in the case where the spectral gap conditions are violated.
\par
Up to the moment, there are two approaches to build up IMs beyond of spectral gap conditions. The first one is to try to make a change of the dependent variable transforming the equation to a new one for which the spectral gap conditions are satisfied or/and to embed it to a larger system of equations with spectral gap conditions. This works, e.g., for 1D reaction-diffusion-advection problems with Dirichlet and Neumann boundary conditions (surprisingly, in the case of periodic boundary conditions an IM may not exist, see \cite{AZ1,AZ2}). This approach is somehow inspired by the attempt to get the IM for 2D Navier-Stokes equation via the so-called Kwak transform (\cite{Kwak,Tem}) which unfortunately contains an irrecoverable  error, see \cite{AZ4} for more details.
\par
The second one is the so-called spatial averaging method which works mainly for 2D and 3D tori and is related with the fact that the multiplication operator on a smooth function $f(x)$ restricted to the properly chosen "intermediate" modes  is close to the multiplication on its spatial average $\<f\>$.
This approach has been initially developed by Sell and Mallet-Paret to build up IMs for scalar reaction-diffusion equations on 2D and 3D tori, see \cite{MPS} and is extended nowadays to many other classes of equations, e.g., for 3D Cahn-Hilliard equation (see \cite{AZ5}) or modified Navier-Stokes equations (see \cite{A}). Note that this method usually does not work for systems since it is crucial that $\<f\>$ is a scalar, not a matrix (and exceptional case is exactly the modified Navier-Stokes system where $\<f\>$ equals to zero identically).
\par
An intermediate step between H\"older continuous IF build via the Man\'e projection theorem and IMs is the
 so-called Romanov theory which gives necessary and sufficient conditions for the existence of
  Lipschitz continuous Man\'e projections and Lipschitz IFs, see \cite{Rom1,Rom4}. The conditions for that
  are somehow close but slightly weaker than the ones for the IMs. For instance, the cone condition
  also plays a crucial role here, but it should be verified on the global attractor only, not in the
  whole phase space. This allows us to verify it locally for the linearization of our equation
  at every complete bounded trajectory belonging to the attractor without taking care about cut-off
   procedures (as known the proper cut-off of the considered equation is one of the key technical
    problems in both approaches to IMs mentioned above, see \cite{AZ1,AZ2,MPS,Z} for more details).
     On the other hand we believe that the cut-off problem has a technical nature, so in more or
 less general situation the existence of Lipschitz Man\'e projections should imply also the existence
 of an IM. By this reason we treat establishing the existence of Lipschitz Man\'e projectors as
 the most essential step in constructing the IM. In addition, at this step we may demonstrate key
 ideas in a more transparent way avoiding the technicalities related with the cut-off procedure.
\par
The main aim of this paper is to present a new method of verifying the existence of Lipschitz Man\'e projectors and potentially IMs which we refer as {\it spatio-temporal averaging} method. This method is illustrated on the model example of 3D complex Ginzburg-Landau equation with periodic boundary conditions or more general, the following cross-diffusion system:
\begin{equation}\label{0.cross}
\Dt \Psi=(1+i\omega)\Dx \Psi-f(\Psi,\bar \Psi)
\end{equation}
endowed with periodic BC. Here $\Psi=\Psi_{Re}(t,x)+i\Psi_{Im}(t,x)$ is an unknown complex-valued function, $\omega\in\R$, $\bar \Psi=\Psi_{Re}-i\Psi_{Im}$ is a complex conjugate function and $f$ is a given smooth function. In the particular case
$$
f(\Psi,\bar\Psi)=(1+i\beta)\Psi|\Psi|^2-(1+i\gamma)\Psi
$$
we end up with the classical Ginzburg-Landau equation (see \cite{DG,Mie} and references therein for more details concerning this equation and its physical meaning).
\par
The suggested method is a combination of the spatial averaging principle of Sell and Mallet-Paret with the classical temporal averaging for equations with large dispersion (in the spirit of  \cite{ATZ}). Roughly speaking, at the first step we use spatial averaging in order to get rid of the dependence on spatial variable $x$ in the equation of variations and replace $f'(\cdot)v$ by $\<f'\>v$. However, the matrix $\<f'\>$ is not a scalar matrix, so this is not enough to get the result. The key observation here is that if the cross-diffusion coefficient $\omega\ne0$, the term $i\omega\Dx v$ produces a {\it large} dispersion on the intermediate modes (no matter how small $\omega$ is) which can be averaged. Performing this temporal averaging, we finally arrive at a scalar matrix which allows us to complete the arguments, see section \ref{s1}.
\par
The main result of the paper is the following theorem.
\begin{theorem}\label{Th0.main} Let $\omega\ne0$ and let the nonlinearity $f$ be smooth. Assume also that equation \eqref{0.cross} is globally solvable in the phase space $\Phi=H^2_{per}(\Bbb T^3)$ and possesses a dissipative estimate
\begin{equation}\label{0.dis}
\|\Psi(t)\|_{\Phi}\le Q(\|\Psi(0)\|_{\Phi})e^{-\alpha t}+C_*,\ \ t\ge0,
\end{equation}
where the monotone increasing function $Q$ and positive constants $\alpha$ and $C_*$ are independent of $\Psi(0)$. Then the corresponding solution semigroup in the phase space $\Phi$ possesses
 a global attractor $\Cal A$ which has a Lipschitz continuous Man\'e projector. In particular, equation \eqref{0.cross} possesses an IF with Lipschitz continuous vector field.
\end{theorem}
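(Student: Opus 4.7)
The plan is to cast this in the Romanov framework for Lipschitz Man\'e projectors. The dissipative estimate \eqref{0.dis} together with the standard smoothing for semilinear parabolic problems yields a compact global attractor $\Cal A\subset\Phi$, and by Romanov's criterion the existence of a Lipschitz continuous Man\'e projector (and hence of a Lipschitz inertial form) follows once a cone/squeezing condition is verified for the equation of variations
\begin{equation*}
\Dt v=(1+i\omega)\Dx v-M(t,x)v
\end{equation*}
along every complete bounded trajectory $\Psi(t)\in\Cal A$, where $v$ is regarded as a real vector $(v_{Re},v_{Im})$ and $M(t,x)$ is the $2\times 2$ Jacobian of $f$ at $\Psi(t,x)$. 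Because $\Psi$ is a bounded complete solution on the attractor, parabolic regularization makes $M$ smooth in $x$ with all derivatives uniformly bounded in $t$, and this is where the cut-off issues that complicate the full IM construction are sidestepped.

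For the cone condition I would use the splitting $v=P_N v+Q_N v$ based on a spectral projector of $-\Dx$ and establish that the quadratic form $V(v)=\|Q_N v\|^2-\|P_N v\|^2$ is monotone along the variational equation with a definite rate. The first ingredient is the Sell--Mallet-Paret spatial averaging: on a suitably chosen intermediate band of frequencies $|k|^2\in[N-K,N+K]$, three-dimensional lattice-point geometry on spheres yields, along a carefully selected subsequence of $N$,
\begin{equation*}
\|P_{\rm int}(Mv)-\<M\>\,P_{\rm int}v\|_{L^2}\le\eta\,\|v\|_{L^2}
\end{equation*}
for arbitrary $\eta>0$, provided $N=N(\eta,K,\|M\|_{C^s})$ is large enough, see \cite{MPS}. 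This reduces the remaining analysis to a variational equation with spatially constant coefficient matrix $\<M\>(t)$.

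The new and decisive difficulty is that $\<M\>(t)$ is still a $2\times 2$ matrix rather than a scalar, so the classical scalar cone argument fails and the assumption $\omega\ne0$ must now be exploited. On each Fourier mode $k$ in the intermediate band, conjugation by $e^{(1+i\omega)\Dx t}$ turns the non-scalar part of $\<M\>(t)$ into a coefficient oscillating at frequency $\pm 2\omega|k|^2$. Since $|k|^2\sim N$, these oscillations are uniformly rapid in $N$, and one integration by parts in $t$ (the temporal averaging trick of \cite{ATZ}) converts the non-scalar contribution into an error of order $1/(\omega N)$; the boundary terms $\Dt\<M\>=\<\Dt f'(\Psi)\>$ are controlled by the smoothness of $\Psi$ on the attractor together with the equation itself. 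What survives both averagings is the scalar trace part $\tfrac12\operatorname{tr}\<M\>(t)\cdot I$, for which cone monotonicity follows from the parabolic spectral gap $\lambda_{N+1}-\lambda_N$ exactly as in the scalar Sell--Mallet-Paret argument.

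The main obstacle is reconciling the two averaging procedures simultaneously: spatial averaging fixes $N$ inside a lattice-dependent subsequence and imposes a thin band, while temporal averaging requires $\omega N\gg 1$ to dominate the derivatives of $\<M\>$ and the spectral gap around $\lambda_N\sim N$ must in turn dominate the combined errors. The parameters $N$, $K$ and $\eta$ have to be tuned so that (i) the three-dimensional number-theoretic condition for spatial averaging holds, (ii) $\omega N$ is as large as needed uniformly on the band, (iii) $\lambda_{N+1}-\lambda_N$ exceeds the residual errors plus the scalar Lipschitz contribution from $\tfrac12\operatorname{tr}\<M\>$, and (iv) the integration-by-parts remainder is absorbed by the parabolic smoothing. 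Once the cone condition is verified, the existence of the Lipschitz Man\'e projector and of an inertial form with Lipschitz vector field follows from Romanov's theorem \cite{Rom1,Rom4}.
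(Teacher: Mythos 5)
Your proposal follows essentially the same route as the paper: existence and smoothness of the attractor from the dissipative estimate, Romanov's criterion reducing everything to a uniform estimate for the equation of variations along complete bounded trajectories, Sell--Mallet-Paret spatial averaging on an intermediate band of modes, and then temporal averaging of the oscillations produced by the dispersive term $i\omega\Dx$ (which is exactly the paper's Steps 2--4, carried out there not via a differential cone inequality but via backward solvability in weighted $L^2$ spaces and a Banach fixed point, followed by condition 3 of Romanov's theorem). However, two points in your description of the mechanism are not right as stated. First, conjugation by $e^{(1+i\omega)\Dx t}$ does \emph{not} make the whole non-scalar part of $\langle M\rangle(t)$ oscillate: writing the linearization in complex form as $a(t,x)v+b(t,x)\bar v$, only the conjugate-linear part $\langle b\rangle\bar v$ (the trace-free symmetric part of $M$) acquires the factor $e^{\pm 2i\omega|k|^2t}$; the skew-symmetric part, i.e.\ multiplication by $i\,\mathrm{Im}\,\langle a\rangle$, commutes with the conjugation and survives the temporal averaging. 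So what remains is the full complex scalar $\langle a\rangle(t)$, not just $\tfrac12\operatorname{tr}\langle M\rangle\cdot I$; this is harmless (the imaginary part is a phase), but it must be disposed of separately -- the paper does this at the very start by the time-dependent weight $w(t)=e^{\int_0^t\langle a(s)\rangle ds}v(t)$, which normalizes $\langle a\rangle\equiv 0$ before any averaging.

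Second, and more seriously, your tuning requirement (iii) -- that $\lambda_{N+1}-\lambda_N$ exceed the residual errors \emph{plus the scalar Lipschitz contribution from} $\tfrac12\operatorname{tr}\langle M\rangle$ -- is both unattainable and unnecessary. On the 3D torus the gaps between consecutive eigenvalues of $-\Dx$ are bounded by a universal constant (sums of three squares have bounded gaps), while $\operatorname{tr}\langle M\rangle$ is of the size of the Lipschitz constant $K$ of $f'$ on the attractor and can be arbitrarily large; if the gap really had to dominate it, the argument would fail for general smooth $f$, and indeed this is precisely the spectral gap obstruction that spatial averaging is designed to bypass. The correct statement is that the spatially averaged scalar part need not be dominated at all: it contributes a multiple of the cone functional itself (equivalently, it is absorbed into the exponential weight, shifting $\theta_N$ by at most $K$ as in the paper's transformation and Corollary \ref{Cor1.main}), and the only quantities the fixed distance $|\lambda_n-(N+\tfrac12)|\ge\tfrac12$ must dominate are the $\eta$-error from spatial averaging, the $O\bigl(K^2/L\bigr)$ coupling to the low/high modes, and the $O\bigl((L+K)/(\omega N)\bigr)$ remainders from the temporal integration by parts. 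With these two corrections your scheme coincides with the paper's proof.
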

The proof of this theorem is given in section \ref{s2}, see also Remark \ref{Rem2.cond} for more details on validity of the dissipative estimate \eqref{0.dis}. Note also that the assumption $\omega\ne0$ is crucial here. The counterexamples to existence of a normally hyperbolic IM in the self-adjoint case $\omega=0$ are given in \cite{Rom3}.

\section{Key estimates for the linearized equation}\label{s1}
In this section, we study backward in time solutions for the following linear complex Ginzburg-Landau equation
\begin{equation}\label{3.lincgl}
\Dt v-(1+i\omega)\Dx v+a(t,x)v+b(t,x)\bar v=h(t),\ \ \ t\le0
\end{equation}
in a domain $\Omega=\mathbb T^3:=(-\pi,\pi)^3$ endowed by periodic boundary conditions. Here $v(t,x)=v_r(t,x)+iv_i(t,x)$ is an unknown complex valued function, $\bar v=v_r-iv_i$ is a complex conjugation, $\omega\in\R$, $\omega\ne0$, is a given real number, $a$ and $b$ are given functions which satisfy
\begin{equation}\label{3.bounded}
\|a\|_{C^1_b(\R\times\mathbb T^3)}+\|b\|_{C^1_b(\R\times\mathbb T^3)}\le K,
\end{equation}
and $h(t)$ is a given function the conditions on which will be specified later.
\par
We want to solve problem \eqref{3.lincgl} backward in time with an extra initial condition
\begin{equation}\label{3.in}
P_Nv\big|_{t=0}=v_+
\end{equation}
in the proper weighted spaces. Here and below $P_N: H:=L^2(\Omega)\to H_N$ is an orthoprojector to the finite-dimensional subspace $H_N$ generated by all eigenvectors $e_n$ of the Laplacian $-\Dx$ (with periodic boundary conditions) which eigenvectors $\lambda_n$ satisfy $\lambda_n\le N$.
\par
We first note that without loss of generality, we may assume that
\begin{equation}\label{3.mean}
\<a(t)\>:=\frac1{(2\pi)^3}\int_{\Omega}a(t,x)\,dx\equiv0.
\end{equation}
Indeed, if this condition is violated, we may change the dependent variable
\begin{equation}\label{2.trans}
w(t)=e^{\int_0^t\<a(s)\>\,ds}v(t)
\end{equation}
which gives
\begin{multline}\label{3.eqmean}
\Dt w-(1+i\omega)\Dx w+(a(t,x)-\<a(t)\>)w+b(t,x)e^{2i\int_0^t\<a_i(s)\>\,ds}\bar w=\\=e^{\int_0^t\<a(s)\>\,ds}h(t):=\tilde h(t).
\end{multline}
We see that the new weight satisfies
\begin{equation}\label{2.weight}
e^{-Kt}\le \big |e^{\int_0^t\<a(s)\>\,ds}\big|\le e^{Kt}
\end{equation}
independently of the choice of $N$ and new coefficients $a$ and $b$ satisfy \eqref{3.mean} and inequality  \eqref{3.bounded} (maybe with new constant $K'$ depending only on $K$). By this reason, we assume from the very beginning that \eqref{3.mean} is satisfied.
\par
\begin{theorem}\label{Th2.main} Let the assumptions \eqref{3.bounded} and \eqref{3.mean} hold. Then, there exists an infinite number of $N$s such that, for every
$$
h\in \mathcal H^-_\theta:=L^2_{e^{\theta t}}(\R_-,H),\ \ \theta=N+\frac12
$$
and every $v_+\in H_N$,
problem \eqref{3.lincgl}, \eqref{3.in} possesses a unique solution $v\in\mathcal H^-_\theta$ and the following estimate holds:
\begin{equation}\label{3.main}
\|v\|_{\mathcal H^-_\theta}\le C\(\|h\|_{\mathcal H^-_\theta}+\|v^+\|_H\).
\end{equation}
Moreover, the sequence of $N$s and the constant $C$ depend only on the constant $K$ in assumption \eqref{3.bounded} and are independent of the concrete choice of $a$ and $b$ satisfying this assumption.
\end{theorem}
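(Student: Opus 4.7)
The strategy is to reduce everything to an a priori weighted energy estimate, obtained by combining the spatial averaging principle of Sell--Mallet-Paret with a temporal averaging coming from the large dispersion $i\omega\Dx$; existence then follows by Galerkin approximation. I begin by fixing $N$ appropriately. Since $\{\lambda_n\}=\{|k|^2:k\in\mathbb Z^3\}$ has a well-known arithmetic structure on $\mathbb T^3$, I select an infinite subsequence $N\to\infty$ such that (i) the Laplace spectrum has a gap $[N-C_K,N+C_K]$ free of eigenvalues, and (ii) the Sell--Mallet-Paret bound
\[
\|P_B(a\,u)\|_H\le\eta(N)\|u\|_H+\frac{K}{\sqrt N}\|\nabla u\|_H,\qquad\eta(N)\to 0,
\]
holds for every mean-zero $a$ with $\|a\|_{C^1}\le K$, where $P_B$ is the projection onto the intermediate Fourier band $\{n:|\lambda_n-\theta|\le C_K\}$. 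Both properties hold for such a subsequence by the number-theoretic constructions of \cite{MPS}. Decompose $v=v^\flat+v^\sharp$ with $v^\flat:=P_N v$, $v^\sharp:=(I-P_N)v$. Multiplying \eqref{3.lincgl} by $\bar v\,e^{2\theta t}$, taking real parts, and integrating over $(-\infty,0]\times\mathbb T^3$---with the dispersive term $\Ree(i\omega\Dx v,v)=0$ dropping out---gives
\[
\|v(0)\|_H^2+2\int_{-\infty}^0 e^{2\theta t}\|\nabla v\|_H^2\,dt=2\int_{-\infty}^0 e^{2\theta t}\bigl[\theta\|v\|_H^2-\Ree(a v,v)_H-\Ree(b\bar v,v)_H+\Ree(h,v)_H\bigr]\,dt.
\]
On $v^\sharp$ the gap delivers $\|\nabla v^\sharp\|_H^2-\theta\|v^\sharp\|_H^2\ge C_K\|v^\sharp\|_H^2$; on $v^\flat$ the analogous quantity has the wrong sign, but the parallel identity obtained by testing $P_N$ applied to \eqref{3.lincgl} against $\bar v^\flat\,e^{2\theta t}$ bounds $\|v^\flat\|_{\mathcal H^-_\theta}$ by $\|v^+\|_H$, $\|h\|_{\mathcal H^-_\theta}$, and the cross-couplings $(av^\sharp,v^\flat)_H$, $(b\bar v^\sharp,v^\flat)_H$ together with their conjugates.

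The heart of the argument is absorbing those couplings. For the $a$-term, the spatial-averaging bound (together with \eqref{3.mean}, which kills the on-band diagonal $\langle a\rangle v\equiv 0$) reduces $(av,v)_H$ to an off-diagonal contribution of size $\eta(N)\|v\|^2_{\mathcal H^-_\theta}+(K/\sqrt N)\|\nabla v\|^2_{\mathcal H^-_\theta}$ plus pieces separated across the spectral gap, which are absorbed in $\|\nabla v^\sharp\|^2_{\mathcal H^-_\theta}$. For the $b\bar v$-term I exploit the dispersion: after the unitary gauge change $u_n(t):=e^{i\omega\lambda_n t}v_n(t)$ on each Fourier mode, the $(n,m)$-entry of $(b\bar v,v)_H$ acquires the oscillating phase $e^{i\omega(\lambda_n+\lambda_m)t}$. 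On the intermediate band the diagonal frequency satisfies $2\omega\lambda_n\ge 2|\omega|N$, so a single integration by parts in $t$---the derivative landing either on the slowly varying factor $b$ (controlled by $\|b\|_{C^1_b}\le K$), on the weight (producing an extra $\theta$), or producing a boundary term at $t=0$ absorbed by $\|v(0)\|^2$---yields
\[
\Bigl|\int_{-\infty}^0 e^{2\theta t}(b\bar v,v)_H\,dt\Bigr|\le\frac{C(K)}{|\omega|N}\bigl(\|v\|^2_{\mathcal H^-_\theta}+\|v(0)\|_H^2\bigr)+(\text{gap-separated off-diagonal}),
\]
with the remainder handled by the same frequency argument as for $a$. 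Choosing $N$ large along the admissible sequence, the combined error $\eta(N)+K/\sqrt N+C(K)/(|\omega|N)$ is strictly less than the coercivity constant $C_K$, and absorption gives $\|v\|_{\mathcal H^-_\theta}\le C(\|h\|_{\mathcal H^-_\theta}+\|v^+\|_H)$.

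Existence follows by a Galerkin truncation on $P_MH$, $M\gg N$: the low modes $P_Nv$ are prescribed at $t=0$, while the modes $\lambda_n\in(N,M]$, for which $\lambda_n>\theta$, are determined by a contraction fixed-point enforcing decay at $t=-\infty$ in the weighted norm; the above a priori estimate holds uniformly in $M$, and passage to $M\to\infty$ is routine. Uniqueness is then immediate by applying the estimate to the difference of two solutions (which solves the homogeneous problem with $v^+=0$). The main obstacle I anticipate is the temporal-averaging step: the $b\bar v$-coupling mixes $v$ with $\bar v$, so before the gauge change no oscillation is visible, and the integration by parts in $t$ must be organized so that the off-diagonal entries of $b(t,x)$ spread across many Fourier modes do not accumulate to overwhelm the $1/(|\omega|N)$ gain. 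This is exactly where $\omega\ne 0$ is indispensable, consistent with the counterexample of \cite{Rom3} in the self-adjoint case $\omega=0$ noted after the theorem.
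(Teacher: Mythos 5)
Your toolkit is the right one (band-restricted spatial averaging, a gauge change $e^{i\omega\lambda_n t}$ exploiting the large dispersion on modes near $N$, and the half-integer choice $\theta=N+\tfrac12$), but the foundation of your energy scheme contains a genuine error: condition (i), the existence of infinitely many $N$ for which the spectrum of $-\Dx$ on $\Bbb T^3$ has an eigenvalue-free gap $[N-C_K,N+C_K]$. The eigenvalues are exactly the integers representable as sums of three squares, and by Legendre's theorem the non-representable integers are precisely those of the form $4^a(8b+7)$; among any four consecutive integers at least two are sums of three squares, so no eigenvalue-free gap of width larger than an absolute constant ($\approx 4$) exists for large $N$. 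Since your $C_K$ must dominate $K$ (you need the coercivity $\|\nabla v^\sharp\|^2_H-\theta\|v^\sharp\|_H^2\ge C_K\|v^\sharp\|_H^2$ to absorb couplings of size $O(K)$), such $N$ simply do not exist once $K$ is moderately large; moreover this is not what the number-theoretic input of \cite{MPS} provides -- that result concerns the sparsity of lattice-point differences inside a spherical layer, which yields the averaging estimate \eqref{3.avs}, not spectral gaps. Indeed, if gaps of width $2K$ were available, the classical spectral-gap condition \eqref{0.sg} would already give the result and no averaging would be needed; the whole point of the theorem is that they are not. As written, your coercivity bound for $v^\sharp=(I-P_N)v$ collapses: the modes with $\lambda_n=N+1,N+2,\dots$ are present and give only $|\lambda_n-\theta|\ge\tfrac12$, which cannot absorb the $O(K)$ terms $\Ree(av^\sharp,v^\sharp)$, $\Ree(b\bar v^\sharp,v^\sharp)$ there.

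The paper's proof avoids this by a three-way splitting: lower modes ($\lambda_n<N-L$), an intermediate band $N-L\le\lambda_n\le N+L$ with $K\ll L\ll N$ (which is \emph{full} of eigenvalues), and higher modes ($\lambda_n>N+L$). Outside the band one only uses the trivial separation $|\lambda_n-\theta|>L\gg K$ (Lemma \ref{Lem3.ni}), no number theory; the lower/higher components are then eliminated by a contraction argument (Lemma \ref{Lem3.Lyap}), leaving a band equation in which spatial averaging (with $\<a\>=0$) removes the $a$-term and the $b$-term becomes $\beta(t)\bar z$; finally the oscillation $e^{2i\omega_n t}$ with $\omega_n=\omega\lambda_n\approx\omega N$ is removed by the near-identity substitution $U_n=Z_n-\tfrac{i}{2\omega_n}e^{2i\omega_n t}\beta(t)\bar Z_n$ (your integration by parts is the energy-identity counterpart of this), after which only $|\lambda_n-\theta|\ge\tfrac12$ is needed on the band. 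If you replace your condition (i) by this band structure -- coercivity $\ge L\gg K$ off the band, averaging plus the half-integer separation on the band -- your scheme can be repaired, but then it essentially reproduces the paper's argument; as it stands, the proof does not go through.
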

\begin{proof} We divide it on several steps.
\par
{\it Step 1. Elementary transformations.} First we reduce the problem to the non-weighted case by the standard  change of variables:
$$
w(t)=e^{\theta t}v.
$$
This gives
\begin{equation}\label{3.nw}
\Dt w-(1+i\omega)\Dx w-\theta w+a w+b\bar w=e^{\theta t}h(t):=\tilde h(t),\ \ P_Nw\big|_{t=0}=v^+.
\end{equation}
Thus, instead of proving weighted estimate \eqref{3.main}, it is equivalent to verify its  non-weighted analogue (in the space $\mathcal H^-_0$) for equation \eqref{3.nw}.
\par
Next, we get rid of the initial data $v^+$. To this end we consider  equation \eqref{3.nw} in the particular case $a=b=0$:
\begin{equation}
\Dt w-(1+i\omega)\Dx w -\theta w=\tilde h(t),\ \ P_Nw\big|_{t=0}=v^+
\end{equation}
and split it into the Fourier series with respect to the eigenvectors $\{e_n\}_{n=1}^\infty$ of the Laplacian: $w(t)=\sum_{n=1}^\infty w_n(t)e_n$. Then the Fourier coefficients $w_n(t)$ solve
\begin{equation}\label{3.f0}
\frac d{dt}w_n+(\lambda_n-\theta+i\omega\lambda_n)w_n=\tilde h_n,\ t\le0, \ w_n(0)=v_n^+, \text{ if } \lambda_n\le N,
\end{equation}
 where $\tilde h_n$ and $v^+_n$ are the Fourier coefficients of $\tilde h$ and $v^+$ respectively.
To proceed further, we need the following simple lemma which is one of the key technical tools for proving the theorem.

\begin{lemma}\label{Lem3.key} For every, $v^+\in H_N$ and every $\tilde h_n\in L^2(\R_-,\mathbb C)$, problem \eqref{3.f0} possesses a unique solution $w_n\in  L^2(\R_-,\mathbb C)$ and the following estimate holds:
\begin{equation}
\|w_n\|_{L^2(\R_-,\mathbb C)}\le \frac 1{|\lambda_n-\theta|}\|\tilde h_n\|_{L^2(\R_-,\mathbb C)}+\frac1{\sqrt{2|\lambda_n-\theta|}}\|v^+_n\|_{\mathbb C}.
\end{equation}
\end{lemma}
\begin{proof}[Proof of the lemma] We recall   that $\theta=N+\frac12$. By this reason, the explicit solution
$$
w_n(t)=e^{(\theta-\lambda_n-i\omega\lambda_n)t}v_n^+ \text{ if } \lambda_n\le N
$$
of equation \eqref{3.f0} with $\tilde h_n=0$ belongs to $L^2(\R_-,\Bbb C)$ and satisfies
$$
\|w_n\|_{L^2(\R_-,\Bbb C)}\le \frac1{\sqrt{2|\lambda_n-\theta|}}|v_n^+|.
$$
Thus, we only need to verify the estimate for the case $v_n^+=0$ (if $\lambda_n\le N$). It is not difficult to check (see \cite{Z}) that the solution $w_n$ can be found as a unique solution of
$$
\frac{d}{dt}w_n+(\lambda_n-\theta+i\lambda_n)w_n=\tilde h_n
$$
defined for all $t\in\R$ and belonging to $L^2(\R,\Bbb C)$ (where we extend $\tilde h_n$ by zero for $t\ge0$. After that we may do Fourier transform in time and use Plancherel equality to get
$$
\|w_n\|_{L^2(\R,\Bbb C)}\le\frac1{|\lambda_n-\theta|}\|\tilde h_n\|_{L^2(\R,\Bbb C)}
$$
and the lemma is proved.
\end{proof}
Using this lemma, we construct a solution $W=W(v^+)$ of the problem
$$
\Dt W-(1+i\omega)\Dx W-\theta W=0,\ \ P_NW\big|_{t=0}=v^+
$$
belonging to $L^2(\R_-,H)$ and satisfying
$$
\|W\|_{\mathcal H_0^-}\le \|v^+\|_H
$$
(here we have used that $\lambda_n\in\Bbb Z$ and therefore $|\lambda_n-\theta|\ge\frac12$). Introducing now $\tilde w=w-W$, we see that this function solves
$$
\Dt\tilde w-(1+i\omega)\Dx\tilde w-\theta\tilde w+a\tilde w+b\bar{\tilde w}=h_1(t), \ \ P_N\tilde w\big|_{t=0}=0
$$
where $h_1(t):=\tilde h(t)+a W(t)+b\bar W(t)$. Thus,
$$
\|h_1\|_{\Cal H_0^-}\le\|\tilde h\|_{\Cal H_0^-}+2K\|v^+\|_H
$$
and the new function $\tilde w\in\Cal H_0^-$ satisfies equation \eqref{3.nw}, but already with $v^+=0$. By this reason, we may assume that $v^+=0$ from the very beginning and study problem \eqref{3.nw} with $v^+=0$ only.
\par
{\it Step 2. Reduction to intermediate modes.} For any $0<L<N$, $L\in\Bbb N$, let us introduce the orthoprojectors $\Cal P_{N,L}$, $\Cal I_{N,L}$ and $\Cal Q_{N,L}$ to the lower Fourier modes ($\{e_n\}$ with $\lambda_n<N-L$), intermediate modes (with $N-L\le\lambda_n\le N+L$) and higher modes (with $\lambda_n>L+N$) respectively. At this moment $L$ may be arbitrary, but later we will use these projectors in the situation where
\begin{equation}\label{3.range}
0<K\ll L\ll N.
\end{equation}
We split the solution $w$ of equation \eqref{3.nw} (from now on  we always assume that $v^+=0$) in a sum of 3 components:
$$
w(t)=\Cal P_{N,L}w(t)+\Cal I_{N,L}w(t)+\Cal Q_{N,L}w(t):=w_+(t)+z(t)+w_-(t).
$$
Then applying the projectors to \eqref{3.nw}, we get
\begin{equation*}
\begin{gathered}
\Dt w_+-(1+i\omega)\Dx w_+-\theta w_+=\\=\tilde h_+-
\Cal P_{N,L}a(w_++w_-+z)-\Cal P_{N,L}b(\bar w_++\bar w_-+\bar z),\\
\Dt w_--(1+i\omega)\Dx w_--\theta w_-=\\=\tilde h_--
\Cal Q_{N,L}a(w_++w_-+z)-\Cal Q_{N,L}b(\bar w_++\bar w_-+\bar z),\\
\Dt z-(1+i\omega)\Dx z-\theta z=\\=\tilde h_0-
\Cal I_{N,L}a(w_++w_-+z)-\Cal I_{N,L}b(\bar w_++\bar w_-+\bar z),\\
\end{gathered}
\end{equation*}
where $\tilde h=\tilde h_++\tilde h_0+\tilde h_-$ is a splitting of $\tilde h$ to lower, intermediate and higher modes.
\par
Let us try to solve the first and the second equations of this system assuming that $z\in L^2(\R_-,H)$ is given. To this end, we need the following lemma.

\begin{lemma}\label{Lem3.ni} Let $h_\pm\in L^2(\R_-,H_{\pm})$ be given. Then the equation
$$
\Dt w_\pm-(1+i\omega)\Dx w_\pm-\theta w_\pm=h_\pm,\ \ \Cal P_{N,L} w_\pm\big|_{t=0}=0
$$
possesses a unique solution $w_\pm\in L^2(\R_-,H_\pm)$ and the following estimate holds:
\begin{equation}\label{3.l-h}
\|w_{\pm}\|_{L^2(\R_-,H_\pm)}\le \frac {1}L\|h_{\pm}\|_{L^2(\R_-,H_{\pm})}.
\end{equation}
Here and below $H_+=\Cal P_{N,L}H$, $H_I:=\Cal I_{N,L}H$ and $H_-=\Cal Q_{N,L}H$.
\end{lemma}
Indeed, this result is a straightforward corollary of Lemma \ref{Lem3.key} and the fact that $|\lambda_n-N-\frac12|>L$ if $\lambda_n$ does not belong to the intermediate modes.
\par
The last lemma allows us to solve uniquely equations for $w_+$ and $w_-$ if the intermediate component $z$ is given and $K\ll L$. Indeed, to this end, we just need to invert the left-hand sides of equations for $w_+$ and $w_-$ and use the Banach contraction theorem (the contraction will be guaranteed by estimates \eqref{3.l-h} and \eqref{3.bounded}. This gives the following result.

\begin{lemma}\label{Lem3.Lyap} Let $K\ll L$ and let $z\in L^2(\R_-,H_I)$ be given. Then, there are bounded linear operators
$$
\Phi_\pm:L^2(\R_-,H_I)\to L^2(\R_-,H_\pm),\ \ \Psi_\pm: L^2(\R_-,H_\pm)\to L^2(\R_-,H_\pm)
$$
such that the unique solutions $w_\pm\in L^2(\R_-,H_\pm)$ for the lower and higher modes are given by
$$
w_\pm=\Phi_\pm z+\Psi_\pm\tilde h_\pm.
$$
Moreover, the following estimates hold:
\begin{equation}
\|\Phi_\pm\|_{\Cal L(L^2(\R_-,H_I),L^2(\R_-,H_\pm))}+\|\Psi_\pm\|_{\Cal L(L^2(\R_-,H_\pm),L^2(\R_-,H_\pm))}\le C\frac KL
\end{equation}
where the constant $C$ is independent of $N$, $K\ll L$, $L$ and the choice of $a$ and $b$.
\end{lemma}
This lemma allows us to express the functions $w_\pm$ through the intermediate function $z$ and put these expressions back to the equation for intermediate modes $z$. This gives us the following result.

\begin{lemma}\label{Lem3.red} Let $K\ll L$. Then equation \eqref{3.nw} is equivalent to the following non-local in time equation:
\begin{equation}\label{3.red}
\Dt z-(1+i\omega)\Dx z-\theta z+\Cal I_{N,L}az+\Cal I_{N,L}b\bar z=\Phi z+g,
\end{equation}
where the function $g=g(\tilde h)$ satisfies
$$
\|g\|_{L^2(\R_-,H_I)}\le C(K+1)\|\tilde h\|_{L^2(\R_-,H)}
$$
and the linear bounded operator $\Phi:L^2(\R_-,H_I)\to L^2(\R_-,H_I)$ possesses the following estimate:
\begin{equation}\label{3.small}
\|\Phi\|_{\Cal L(L^2(\R_-,H_I),L^2(\R_-,H_I))}\le C\frac{K^2}L,
\end{equation}
where the constant $C$ is independent of $N$, $L$ and $K$.
\end{lemma}
As we will see later, the numbers $N$ and $L$ are actually in our disposal, so we may fix $L$ to be large enough and then the non-local term $\Phi z$ will be arbitrarily small. Thus, the proof of the theorem is mainly reduced to solving finite-dimensional equation \eqref{3.red} (with $\Phi=0$) for the intermediate modes. However, this equation is still complicated since the operators $\Cal I_{N,L}a$ and $\Cal I_{N,L}b$ couple all intermediate modes. So, more steps are necessary.
\par
{\it Step 3. Spatial averaging.} At this stage we get rid of the dependence of the coefficients $a$ and $b$ on $x$ using the so-called spatial averaging principle used in \cite{MPS} for constructing the inertial manifolds for 3D scalar reaction-diffusion equations, see also \cite{ACZ,AZ5} for further development and more applications of this method. The key technical tool of this method is the following lemma.

\begin{lemma}\label{Lem3.s-av} Let $\phi\in C^1(\Bbb T^3)$ satisfy $\|\phi\|_{C^1}\le K$. Then, for every $\eb>0$, $K>0$ and  $L>0$ there is an infinite sequence of $N$s such that
\begin{equation}\label{3.avs}
\|\Cal I_{N,L}\phi\Cal I_{N,L}v-\<\phi\>\Cal I_{N,L}v\|_{L^2}\le\eb\|v\|_{L^2},\ \ v\in L^2(\Bbb T^3).
\end{equation}
The sequence of $N$s depends only on $\eb$, $K$ and $L$ (and is independent of the concrete choice of $\phi$).
\end{lemma}
The proof of this lemma is based on the number theoretic results about integer points in a spherical layers and elementary harmonic analysis and can be found in \cite{MPS}, see also \cite{Z}. Note also that the assumption $\phi\in C^1$ can be replaced by $\phi\in C^\kappa$ for some $\kappa>0$.
\par
Applying this lemma to the terms $\Cal I_{N,L}az$ and $\Cal I_{N,L}b\bar{z}$, we get the following result.

\begin{lemma}\label{Lem3.rred} For every $\eb>0$ and $K>0$ there exists a sequence of $L$s and $N$s, $L\ll N$ (e. g., $L<\eb^2N$) such that equation \eqref{3.red} is equivalent to
\begin{equation}\label{3.rred}
\Dt z-(1+i\omega)\Dx z-\theta z+\beta(t)\bar z=\Phi^\eb(z)+g,\  \ z\in L^2(\R_-,H_I),
\end{equation}
where $\beta(t):=\<b(t\>$, the linear operator $\Phi^\eb: L^2(\R_-,H_I)\to L^2(\R_-,H_I)$ satisfies
\begin{equation}\label{3.ebs}
\|\Phi^\eb\|_{\Cal L(L^2(\R_-,H_I),L^2(\R_-,H_I))}\le\eb
\end{equation}
and the norm of $g$ is independent of $L$, $\eb$ and $N$.
\end{lemma}
\begin{proof}[Proof of the lemma] Indeed, applying Lemma \ref{Lem3.s-av} to the term $\Cal I_{N,L}a z$ and using that $\Cal I_{N,L}z=z$ and the assumption $\<a(t)\>=0$, we see that this term is actually of order $\eb$ (we include this corrector to $\Phi^\eb$). Analogously, applying Lemma \ref{Lem3.s-av} to the second term $\Cal I_{N,L}b\bar z$, we get the term $\beta(t)\bar z$ plus small corrector which is included to $\Phi^\eb$. Finally, the term $\Phi(z)$ can be made of order $\eb$ by the choice of $L$ (due to estimate \eqref{3.small}). Thus, the lemma is proved.
\end{proof}
Equation \eqref{3.rred} now can be split to a finite number of 2nd order ODEs coupled through the small perturbation $\Phi^\eb$ only. Indeed, decomposing $z$ into Fourier series, we get
\begin{equation}\label{3.rsplit}
\frac d{dt}z_n+(\lambda_n-\theta)z_n+i\omega\lambda_n z_n+\beta(t)\bar z_n=\Phi_n^\eb(z)+g_n
\end{equation}
for all $n\in\Bbb N$ such that $N-L\le\lambda_n\le N+L$. Here and below $\Phi_n^\eb$ and $g_n$ are Fourier components of $\Phi^\eb$ and $g$ respectively. However, the extra term $\beta(t)\bar z$ still does not allow us  to do standard estimates and we need one more step to handle~it.
\par
{\it Step 4. Temporal averaging.} Equations \eqref{3.rsplit} contain the {\it large} dispertion term $i\omega_n z_n$ with $\omega_n:=\omega\lambda_n$. Here we have crucially used the assumption $\omega\ne0$ and the fact that $\lambda_n\in[N-L,N+L]$ where $N$ is big and $L\ll N$. By this reason, it looks natural to utilize the rapid in time oscillations caused by this dispersive term. To this end, following, say, \cite{ATZ} (see also references therein), we do the change of variables
$$
Z_n(t)=e^{i\omega_n t}z_n(t).
$$
Then, we get
\begin{multline}\label{4.rapid}
\Dt Z_n+(\lambda_n-\theta)Z_n+e^{2 i\omega_n t}\beta(t)\bar Z_n=\\=e^{i\omega_n t}\Phi_n(\{e^{-i\omega_k t}Z_k\})+e^{i\omega_n t}g_n:=\Phi_n^\eb(Z)+G_n.
\end{multline}
Since our transform is an isometry in $H_I$, we have
\begin{multline}
\|Z\|_{L^2(\R_-,H_I)}=\|z\|_{L^2(\R_-,H_I)},\\ \|G\|_{L^2(\R_-,H_I)}=\|g\|_{L^2(\R_-,H_I)},\ \|\Phi^\eb(Z)\|_{L^2(\R_-,H_I)}\le\eb\|Z\|_{L^2(\R_-,H_I)}.
\end{multline}
Thus, equation \eqref{4.rapid} preserves  all good properties of equation \eqref{3.rsplit}, so it is sufficient to prove the unique solvability of \eqref{4.rapid} in the space $L^2(\R_-,H_I)$. This equation has an essential advantage since it contains an explicit rapidly oscillating term with zero mean, so by the classical averaging theory (see \cite{ATZ} and references therein), we expect that this term will be averaged to zero and the solvability of \eqref{4.rapid} for a general $\beta(t)$ should follow from the particular case $\beta=0$ (where it is obvious).
\par
Let us justify this idea. As usual, we transform \eqref{4.rapid} as follows:
\begin{multline}\label{3.huge}
\frac d{dt}\(Z_n-\frac i{2\omega_n}e^{2i\omega_n t}\beta(t)\bar Z_n\)+(\lambda_n-\theta)Z_n=\Phi_n^\eb(Z)+G_n-\\-\frac{i}{2\omega_n}e^{2i\omega_n t}\beta'(t)\bar Z_n-
\frac{i}{2\omega_n}e^{2i\omega_n t}\beta(t)\frac d{dt}\bar Z_n=\Phi_n^\eb(Z)+G_n-\\-\frac{i}{2\omega_n}e^{2i\omega_n t}\(\beta'(t) - \beta(t)(\lambda_n - \theta) \)\bar Z_n+ \frac{i}{2\omega_n}|\beta(t)|^2 Z_n -\\-\frac{i}{2\omega_n}e^{2i\omega_n t}\(\bar \Phi_n^\eb(Z)+\bar G_n\)=\tilde{\Phi}_n^\eb(Z)+\tilde G_n.
\end{multline}
We claim that the norm $\tilde{\Phi}^\eb$ remains of order $\eb$ if we take $N$ large enough and the norm of $\tilde G$ remains bounded. Indeed, $|\beta'(t)|+|\beta(t)|\le K$ is bounded. The term $\frac1{2\omega_n}\le \frac 1{2\omega(N-L)}$ can be made of order $\eb$ if $N$ is large enough. Finally, the term $\frac{|\lambda_n-\theta|}{2\omega_n}\le \frac{2 L+1}{4\omega(N-L)}$ also can be made of order $\eb$ if $N$ is large enough and $L\ll N$. Thus, the new terms $\tilde{\Phi}^\eb$ and $\tilde G$ satisfy the same good estimates as the initial terms $\Phi^\eb$ and $G$.
\par
To complete the proof, we need one more change of variables:
$$
U_n(t):=Z_n-\frac i{2\omega_n}e^{2i\omega_n t}\beta(t)\bar Z_n.
$$
The inverse transform to this is given by
$$
Z_n(t)=\frac 1{1-\frac{|\beta(t)|^2}{4\omega_n^2}}U_n(t)+\frac{\frac{i\beta(t)}{2\omega_n}e^{2i\omega_n t}}
{1-\frac{|\beta(t)|^2}{4\omega_n^2}}\bar U_n(t).
$$
Analogously to previous estimates, we see that the linear transform $U_n\to Z_n$ is invertible and is $\eb$ close to identity (if $N$ is large enough), so inserting the formula for $Z_n$ into \eqref{3.huge}, we finally arrive at
\begin{equation}\label{3.last}
\frac d{dt}U_n+(\lambda_n-\theta)U_n=\widehat{\Phi}^\eb(U)+\widehat G_n,
\end{equation}
where the norm of the operator $\widehat{\Phi}^\eb_n$ is of order $\eb$ and the norm of $\widehat G$ is uniformly bounded as $\eb\to0$. Using now Lemma \ref{Lem3.key} and the fact that $|\lambda_n-\theta|\ge\frac12$, by choosing $\eb>0$ small enough, we see that equations \eqref{3.last} are uniquely solvable in $L^2(\R_-,H_I)$ by Banach contraction theorem. This finishes the proof of the theorem.
\end{proof}
We conclude this section by the following corollary of the proved theorem which is necessary for the non-linear case.
\begin{corollary}\label{Cor1.main} Let the coefficients $a$ and $b$ satisfy condition \eqref{3.bounded} (assumption \eqref{3.mean} is not assumed). Then, there exist infinitely many $N$s and the corresponding exponents $\theta_N=\theta_N(K)$ such that  any bounded backward solution
$v\in C_b(\R_-,H)$  of the equation
\begin{equation}\label{3.lincgl1}
\Dt v-(1+i\omega)\Dx v+a(t,x)v+b(t,x)\bar v=0,\ \ \ t\le0
\end{equation}
satisfies the following estimate:
\begin{equation}\label{2.mest}
\|v(t)\|_{H}\le C_N e^{-\theta_N t}\|P_Nv(0)\|_H,\ \ \ t\le0,
\end{equation}
where the constants $C_N$ and $\theta_N$ depend only on $N$ and $K$, but are independent of the concrete choice of the solution $v$.
\end{corollary}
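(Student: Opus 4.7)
The plan is to reduce this a priori-style estimate to the solvability result of Theorem \ref{Th2.main} in three moves: (i) gauge away the mean of $a$ exactly as done at the start of Section \ref{s1}; (ii) observe that boundedness of $v$ places $v$ automatically inside the weighted space $\mathcal H^-_\theta$ with $\theta=N+\tfrac12$, so that Theorem \ref{Th2.main} applies and yields a weighted $L^2$ bound; (iii) upgrade this weighted $L^2$ bound to the pointwise estimate \eqref{2.mest} by standard parabolic smoothing.

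For (i), I would set $w(t)=e^{\int_0^t\<a(s)\>ds}v(t)$. By the computation \eqref{3.eqmean}--\eqref{2.weight} this turns \eqref{3.lincgl1} into the same type of equation with new coefficients $\tilde a,\tilde b$ satisfying \eqref{3.bounded} with a constant $K'=K'(K)$ and, crucially, $\<\tilde a(t)\>\equiv0$, while for $t\le0$ the gauge factor satisfies $e^{-Kt}\le|e^{\int_0^t\<a\>ds}|\le e^{-Kt}$ (in the sense of upper bound in $|t|$), and $w(0)=v(0)$ so $P_Nw(0)=P_Nv(0)$. For (ii), boundedness of $v$ in $H$ on $\R_-$ gives $\|w(t)\|_H\le e^{-Kt}\sup\|v\|_H$, whence $w\in\mathcal H^-_\theta$ for any $\theta>K$. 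Now choose $N$ from the infinite sequence supplied by Theorem \ref{Th2.main} (applied with the constant $K'$) and large enough that $\theta=N+\tfrac12>K$. Since $w\in\mathcal H^-_\theta$ solves the homogeneous equation with $P_Nw|_{t=0}=P_Nv(0)$, the uniqueness part of Theorem \ref{Th2.main} (with $h=0$, $v^+=P_Nv(0)$) forces
\begin{equation*}
\|w\|_{\mathcal H^-_\theta}\le C\|P_Nv(0)\|_H,
\end{equation*}
with $C=C(K')$ independent of the particular $a,b,v$.

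For (iii), I would pass to $u(t):=e^{\theta t}w(t)$, which satisfies a linear parabolic equation $\Dt u=(1+i\omega)\Dx u+\theta u-\tilde a u-\tilde b\bar u$ with coefficients bounded in terms of $K'$ and $\theta$. A plain energy inequality, using $\Ree\<(1+i\omega)\Dx u,u\>=-\|\nabla u\|^2$ and discarding this good term, gives $\tfrac{d}{dt}\|u\|_H^2\le C_N\|u\|_H^2$, and a one-sided Gronwall argument on $[t-1,t]\subset\R_-$ yields the pointwise-from-$L^2$ estimate
\begin{equation*}
\|u(t)\|_H^2\le e^{C_N}\int_{t-1}^{t}\|u(s)\|_H^2\,ds\le e^{C_N}\|w\|_{\mathcal H^-_\theta}^2\le C'_N\|P_Nv(0)\|_H^2.
\end{equation*}
Unwinding $u\mapsto w\mapsto v$ then gives $\|v(t)\|_H\le e^{-Kt}\|w(t)\|_H\le C_Ne^{-(\theta+K)t}\|P_Nv(0)\|_H$, so the corollary holds with $\theta_N:=N+\tfrac12+K$ along the same sequence of $N$s as in Theorem \ref{Th2.main}. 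The only slightly delicate point I anticipate is the second step: making sure that the bounded backward solution is genuinely in the class in which Theorem \ref{Th2.main} asserts \emph{uniqueness}, so that one is allowed to identify $v$ with the unique $\mathcal H^-_\theta$-solution produced by the theorem; this is what forces the choice $\theta>K$ and, through it, the requirement that $N$ be taken sufficiently large relative to $K$. The parabolic smoothing step is routine, and the gauge reduction is essentially copied from the opening pages of the section.
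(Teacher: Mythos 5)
Your proposal is correct and follows essentially the same route as the paper's proof: gauge away $\<a\>$ via \eqref{2.trans}, use boundedness of $v$ on $\R_-$ to place the transformed solution in the weighted class so that Theorem \ref{Th2.main} (with $h=0$, $v^+=P_Nv(0)$, and $N$ large relative to $K$) yields the weighted $L^2$ bound, then upgrade to the pointwise estimate and unwind the gauge. The only differences are cosmetic: you make explicit the uniqueness-class identification and replace the paper's appeal to ``parabolic smoothing'' with an elementary energy--Gronwall mean-value argument (and the displayed gauge bound has an obvious sign typo, $e^{-Kt}\le\cdot\le e^{-Kt}$), none of which affects correctness.
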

\begin{proof} Indeed, since $v\in C_b(\R_-,H)$,  the transform \eqref{2.trans} together with \eqref{2.weight} gives us the solution $w\in \Cal H_{K+1}$ of equation \eqref{3.eqmean} (with $h=0$) where the condition \eqref{3.mean} is satisfied. If we assume in addition that $N$ is large enough ($N>K+3/2$), we may apply Theorem \ref{Th2.main} and get the following estimate:
\begin{equation}\label{3.west}
\|w\|_{\Cal H_{N+\frac12}}\le C_N\|P_N v(0)\|_H
\end{equation}
which together with the parabolic smoothing property implies that
\begin{equation}\label{3.wc}
\|w(t)\|_H\le C_N e^{-(N+\frac12)t}\|P_Nv(0)\|_H.
\end{equation}
Returning back to the variable $v$ and using \eqref{2.weight} again, we end up with the desired estimate \eqref{2.mest} and finish the proof of the corollary.
\end{proof}
\section{The nonlinear case: finite dimensional reduction on the attractor}\label{s2}
We now study the following semi-linear cross-diffusion equation:
\begin{equation}\label{4.eq}
\Dt\Psi=(1+i\omega)\Dx \Psi +f(\Psi,\overline{\Psi}):=A\Psi+F(\Psi),\ \ \Psi\big|_{t=0}=\Psi_0
\end{equation}
in a domain $\Omega:=(-\pi,\pi)^3$ endowed with periodic boundary conditions. Here $\Psi=\Psi_r(t,x)+i\Psi_i(t,x)$ is an unknown complex valued function, $\omega\in\R$ is a given constant and $f$ is a given smooth function.
\par
We assume that this equation is globally well-posed in higher energy norms and is dissipative. To be more precise, we assume that for any $\Psi_0\in H^2=H^2(\Omega)$ equation \eqref{4.eq} possesses a unique solution $\Psi\in C([0,T],H^2)$ for all $T>0$ and the following estimate holds:
\begin{equation}\label{4.dis}
\|\Psi(t)\|_{H^2}\le Q(\|\Psi_0\|_{H^2})e^{-\alpha t}+Q_*,\ \ t\ge0,
\end{equation}
for some monotone increasing function $Q$ and positive  constants $\alpha,Q_*$ which are independent of $\Psi_0$.
\par
If this assumption is satisfied, then equation \eqref{4.eq} generates a dissipative semigroup $S(t)$, $t\ge0$ in the phase space $H^2$ via
\begin{equation}\label{4.sem}
S(t):H^2\to H^2,\ \ S(t)\Psi_0:=\Psi(t),\ \ \Psi_0\in H^2,
\end{equation}
where $\Psi(t)$ is a solution of equation \eqref{4.eq} with the initial data $\Psi_0$ at time moment $t$ and this semigroup possesses the so-called global attractor in $H^2$. We recall that a set $\Cal A$ is a global attractor for the semigroup $S(t): H^2\to H^2$ if
\par
1. The set $\Cal A$ is compact in $H^2$;
\par
2. It is strictly invariant: $S(t)\Cal A=\Cal A$ for all $t\ge0$;
\par
3. It attracts images of all bounded sets of $H^2$ when $t\to\infty$, i.e., for every bounded set $B$ in $H^2$ and every neighbourhood $\Cal O(\Cal A)$ of the set $\Cal A$, there exists $T=T(B,\Cal O)$ such that
$$
S(t)B\subset\Cal O(\Cal A)\ \ \text{for all}\ \ t\ge T,
$$
see \cite{BV,Tem1,SY} and references therein.
\par
We now state the standard theorem on the existence of a global attractor for the semigroup  \eqref{4.sem}.
\begin{theorem}\label{Th3.stand} Let the problem \eqref{4.eq} possess a unique solution $\Psi$ such that $\Psi\in C([0,T],H^2)$ for all $T>0$ and let the dissipative estimate \eqref{4.dis} be satisfied. Then the solution semigroup $S(t)$ defined by \eqref{4.sem} possesses a global attractor $\Cal A$ in $H^2$. Moreover, this attractor is smooth, i.e., it is a bounded set in $H^s$ for every $s\ge0$:
\begin{equation}\label{4.abound}
\|\Cal A\|_{H^s(\Omega)}\le C_s
\end{equation}
for all $s\ge0$.
\end{theorem}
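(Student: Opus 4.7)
The plan is to apply the classical Babin--Vishik--Temam scheme, which reduces the existence of a global attractor to three ingredients: a bounded absorbing set in the phase space $H^2$, continuity of the semigroup $S(t):H^2\to H^2$ for each $t\ge 0$, and asymptotic compactness. The higher regularity claim \eqref{4.abound} will then follow from a standard bootstrap argument exploiting the strict invariance $\Cal A=S(t)\Cal A$ together with the parabolic smoothing of the linear operator $A=(1+i\omega)\Dx$.

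The absorbing set is immediate from \eqref{4.dis}: any ball $\BB_R\subset H^2$ of radius $R>2Q_*$ absorbs every bounded set of $H^2$ after a finite time $T=T(B)$. Continuity of $S(t)$ for each fixed $t\ge 0$ follows from standard Lipschitz dependence on initial data: one writes the equation for $\Psi_1-\Psi_2$, uses that $H^2(\mathbb T^3)$ is a Banach algebra continuously embedded in $L^\infty$, and applies Gronwall's inequality, with Lipschitz constants depending only on the $H^2$-norms of $\Psi_1,\Psi_2$ (controlled by \eqref{4.dis}) and on $f$, which is smooth. Asymptotic compactness is obtained via parabolic smoothing: since $A=(1+i\omega)\Dx$ is sectorial on $L^2(\mathbb T^3)$ for every $\omega\in\R$, the mild formulation
$$
\Psi(t)=e^{At}\Psi_0+\int_0^t e^{A(t-s)}F(\Psi(s))\,ds
$$
combined with the analytic estimate $\|e^{At}\|_{\Cal L(H^s,H^{s+2\delta})}\le Ct^{-\delta}$ and the uniform $H^2$-bound from \eqref{4.dis} gives $S(t):H^2\to H^{2+\delta}$ bounded on bounded sets for any $\delta\in(0,1)$ and $t>0$. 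The compact embedding $H^{2+\delta}\hookrightarrow H^2$ on the torus then yields asymptotic compactness, and the classical theorem provides the attractor $\Cal A\subset H^2$.

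For \eqref{4.abound} I would use the characterisation of $\Cal A$ as the union of complete bounded trajectories $\Psi\in C_b(\R,H^2)$. Applying the mild-formulation smoothing on every interval $[t_0,t_0+1]$, iterated in $k$ steps, gives boundedness of such $\Psi$ in $H^{2+k\delta}$ uniformly in $t_0$. Once the resulting index exceeds $3/2$, $H^s(\mathbb T^3)$ is a Banach algebra, so smoothness of $f$ together with Moser-type multiplicative inequalities gives $F(\Psi)\in L^\infty(\R,H^s)$; rewriting $A\Psi=-\Dt\Psi+F(\Psi)$ and using elliptic regularity for $A$ (which is invertible modulo the kernel that is trivially controlled), one gains two derivatives, reaching $\Psi\in L^\infty(\R,H^{s+2})$. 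Iterating this step yields $\Psi\in L^\infty(\R,H^s)$ for every $s$, which is exactly \eqref{4.abound}.

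The main obstacle is essentially bookkeeping: ensuring that all the smoothing constants in the bootstrap depend only on the radius of the absorbing ball in $H^2$ and on $f$, not on the particular trajectory, so that the bound \eqref{4.abound} holds uniformly on $\Cal A$. This is standard but relies on the fact that the complex-coefficient operator $(1+i\omega)\Dx$ generates an analytic semigroup with all the usual smoothing estimates uniformly in $\omega$ (on any fixed compact range), and on the algebra property of $H^s(\mathbb T^3)$ for $s>3/2$ to handle composition with the smooth nonlinearity $f$.
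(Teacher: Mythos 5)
Your argument is correct and is essentially the same as the paper's: the paper gives no separate proof of Theorem \ref{Th3.stand}, stating only that it is a straightforward corollary of the classical smoothing property for semilinear parabolic equations (citing Babin--Vishik, Temam, Zelik), which is precisely your absorbing-set / compact-smoothing / bootstrap-on-complete-trajectories scheme. One small caution: the elliptic-regularity step via $A\Psi=-\Dt\Psi+F(\Psi)$ needs an independent bound on $\Dt\Psi$ in $H^s$ (as written it is circular), but this is harmless because your iterated variation-of-constants smoothing on intervals $[t_0,t_0+1]$ already yields the uniform $H^s$ bounds \eqref{4.abound} for every $s$.
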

This statement is a straightforward corollary of the classical smoothing property for semilinear parabolic equations, see \cite{BV,Tem1,Z}. The choice of the phase space $H^2$ is related with our choice of 3D case  where Sobolev embedding $H^2\subset C$ allows us to control the $L^\infty$-norm of the solution. This in turn allows to control the non-linearity without any growth restrictions. Of course, this result is not restricted to cross-diffusion equations and holds for general semilinear paraboloc equations.
\par
\begin{remark}\label{Rem2.cond} The stated theorem is a {\it conditional} result which requires the key dissipative estimate \eqref{4.dis} to be satisfied. Verification of this estimate  may be rather delicate in concrete examples and necessary and sufficient conditions for it are not known even in the case of classical complex Ginzburg-Landau (cGL) equations which correspond to
\begin{equation}\label{4.cgl}
f(\Psi,\overline{\Psi}):=(1+i\beta)\Psi-(1+i\gamma)\Psi|\Psi|^2, \ \beta,\gamma\in\R.
\end{equation}
The list of known sufficient conditions for cGL equations in terms of the parameters $\omega,\beta,\gamma$ can be found, e.g., in \cite{DG}, see also \cite{MZ} for the case of more general non-linearities. In particular, the classical cGL always possesses the dissipative estimate in the $H=L^2(\Omega)$ norm:
\begin{equation}\label{4.hdis}
\|\Psi(t)\|_H\le C\|\Psi_0\|_He^{-\alpha t}+C_*,
\end{equation}
but it is not enough to get dissipativity in higher norms in the 3D, so some restrictions on parameters are neccessary. In the defocusing case $\omega\gamma>0$ dissipative estimate in $H^2$ always hold, but there is an evidence that the $H^1$-norm may blow up in finite time in the self-focusing case $\omega\gamma<0$, see \cite{BRW} and references therein.
\par
On the other hand, the way how the dissipative estimate in the $H^2$-norm can be obtained is not essential for our main results. We just need the result of Theorem \ref{Th3.stand}. By this reason, we do not go further with derivation of this estimate and prefer to state it as an assumption.
\end{remark}
We now briefly discuss the so-called Man\'e projections of a global attractor and related finite-dimensional reduction, see \cite{Rob2} and references therein for more details.
\begin{definition} Let $\Cal A\subset H^2$ be the attractor for the solution semigroup $S(t)$. A linear projector $P: H^2\to \Cal V$, where $\Cal V$ is a finite-dimensional linear subspace of $H^2$, is a Man\'e projector if it is injective on the attractor $\Cal A$. Since $\Cal A$ is compact any Man\'e projector is a homeomorphism between $\Cal A$ and a finite-dimensional set $\widetilde{\Cal A}:=P\Cal A\subset\Cal V$. A Man\'e projector $P$ is called H\"older (resp. Lipschitz) Man\'e projector if its inverse $P^{-1}:\widetilde{\Cal A}\to\Cal A$ is H\"older (resp. Lipschitz) continuous.
\end{definition}
If the attractor $\Cal A$ possesses a Man\'e projector then the semigroup $S(t)$ acting on the attractor $\Cal A$ is topologically conjugate to the semigroup
$$
\widetilde S(t):=P\circ S(t)\circ P^{-1},\ \ \widetilde S(t):\widetilde{\Cal A}\to\widetilde{\Cal A},
$$
acting on a finite-dimensional compact set $\widetilde{\Cal A}\subset \Cal V$. In this sense the Man\'e projector realizes the finite-dimensional reduction of the limit dynamics on the attractor.
\par
Moreover, every Man\'e projector generates a system of ODEs for the limiting dynamics on the attractor -- the so-called inertial form. Namely, let $\widetilde\Psi(t):=P\Psi(t)$ where $\Psi(t)$ is a solution of a semilinear parabolic equation \eqref{4.eq}
 belonging to the attractor. Then, projecting the equation to $\Cal V$, we arrive at
\begin{equation}\label{4.IF}
\frac d{dt}\widetilde\Psi=P\circ A\circ P^{-1}(\widetilde \Psi)+PF(P^{-1}(\widetilde \Psi)).
\end{equation}
This IF has a specially nice form when $P$ is a spectral projector, i.e., when $PA=AP$. Namely,
\begin{equation}\label{4.IFs}
\frac d{dt}\widetilde \Psi=A\widetilde\Psi+PF(P^{-1}(\widetilde\Psi)).
\end{equation}
\begin{remark} As we have already mentioned in the introduction, Man\'e projections exist for more or less general abstract semilinear parabolic equations with global attractors. This fact is based on two general theorems. First of them claims that the attractor $\Cal A$ has finite box-counting dimension (standard corollary for a parabolic smoothing property for the equation of variations, see \cite{BV,Tem1,Rob1,Z}). And the second one is the so-called H\"older Man\'e theorem which claims that, for a compact set in a Banach space with finite box-counting dimension, H\"older Man\'e projectors are generic among all projectors on finite-dimensional planes with sufficiently large dimension, see \cite{Rob2}.
\par
However, this approach has essential drawbacks.
\par
First, we only know that "generic" projector is Man\'e without any algorithm to specify it in a concrete case. For instance, the spectral Man\'e projector may not exist, see counterexamples in \cite{EKZ,Z}.
\par
Second, the inertial form \eqref{4.IF} has only H\"older continuous vector field, so the uniqueness theorem may fail which would make this reduction to ODEs incomplete.
\par
Third, probably most important, as recent counterexamples show (see \cite{EKZ,Z}) the dynamics on the attractor $\Cal A$ with finite box-counting dimension may demonstrate clearly infinite-dimensional features, like limit cicles with super-exponential rate of attraction, travelling waves in Fourier space, etc. and the above scheme is unable to capture and distinguish them from "truely" finite-dimensional dynamics.
\par
As we will see, the situation is better when Lipschitz Man\'e projections are considered. However, they do not exist for general semilinear parabolic equations, so the case study is needed. Fortunately, our cross-diffusion system is exactly the exceptional case.
\end{remark}
The key result about Lipschitz Man\'e projectors is given by the so-called Romanov theory, see \cite{Rom1,Rom4,Z}.

\begin{theorem}\label{Th4.rom} Let $\Cal A$ be an attractor of the semilinear parabolic equation \eqref{4.eq}. Then the following conditions are equivalent:
\par
1. There exist a Lipschitz Man\'e projector.
\par
2. There is a spectral Lipschitz Man\'e projector.
\par
3. The solution semigroup $S(t):\Cal A\to\Cal A$ acting on the attractor can be extended to the Lipschitz continuous group $S(t)$, $t\in\R$, i.e., the inverse operators $S(-t)=S^{-1}(t)$ exist and Lipschitz continuous on the attractor.
\end{theorem}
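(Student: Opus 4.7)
The plan is to establish the cyclic chain $(2)\Rightarrow(1)\Rightarrow(3)\Rightarrow(2)$, with the substantive content sitting in the last implication, where Corollary~\ref{Cor1.main} is invoked. The implication $(2)\Rightarrow(1)$ is tautological, since any spectral Lipschitz Man\'e projector is automatically a Lipschitz Man\'e projector.

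The main step $(3)\Rightarrow(2)$ proceeds as follows. Fix two points $\Psi_1(0),\Psi_2(0)\in\Cal A$. The hypothesis $(3)$, combined with strict invariance $S(t)\Cal A=\Cal A$, produces complete bounded trajectories $\Psi_1(\cdot),\Psi_2(\cdot)\in C_b(\R,\Cal A)$ through these points. Their difference $v:=\Psi_1-\Psi_2\in C_b(\R_-,H)$ satisfies the linearised equation \eqref{3.lincgl1} with coefficients
$$
a(t,x)=-\int_0^1\partial_{\Psi}f\bigl(\Psi_2(t,x)+s\,v(t,x),\overline{\Psi_2(t,x)+s\,v(t,x)}\bigr)\,ds,
$$
and an analogous $b(t,x)$ coming from $\partial_{\bar\Psi}f$. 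Smoothness of $f$ together with the $H^s$-bounds \eqref{4.abound} on $\Cal A$ ensures that \eqref{3.bounded} holds with a constant $K$ depending only on $f$ and $\Cal A$ (hence uniform in the pair $(\Psi_1,\Psi_2)$). Corollary~\ref{Cor1.main} then furnishes an integer $N=N(K)$ and a constant $C_N$ such that
$$
\|\Psi_1(0)-\Psi_2(0)\|_H=\|v(0)\|_H\le C_N\|P_Nv(0)\|_H=C_N\|P_N\Psi_1(0)-P_N\Psi_2(0)\|_H.
$$
This is precisely the statement that the spectral projector $P_N$ is injective on $\Cal A$ with Lipschitz inverse of constant $C_N$; hence $P_N$ is a spectral Lipschitz Man\'e projector, proving $(2)$.

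For $(1)\Rightarrow(3)$, start from a Lipschitz Man\'e projector $P:H^2\to\Cal V$. Strict invariance together with backward uniqueness for \eqref{4.eq} (standard Agmon--Nirenberg log-convexity applied to the difference of two solutions) forces $S(t)|_{\Cal A}$ to be bijective, producing a well-defined inverse $S(-t):\Cal A\to\Cal A$. Forward Lipschitzness on $\Cal A$ is classical continuous dependence in $H^2$. For backward Lipschitzness, transfer the dynamics to the finite-dimensional compact set $\widetilde{\Cal A}=P\Cal A$ via the inertial form \eqref{4.IF}, whose vector field is Lipschitz on $\widetilde{\Cal A}$ thanks to smoothness of $f$, Lipschitzness of $P^{-1}$, and the $H^s$-smoothness of $\Cal A$ from \eqref{4.abound}; Picard--Lindel\"of on a compact invariant set then yields a Lipschitz group on $\widetilde{\Cal A}$, which pulls back to $\Cal A$ via the Lipschitz map $P^{-1}$.

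The main obstacle has really already been resolved by Corollary~\ref{Cor1.main}; once that tool is available, the remaining work in $(3)\Rightarrow(2)$ is only the uniform $C^1$-control of the variational coefficients $a,b$ across the attractor, which is supplied by the smoothness from Theorem~\ref{Th3.stand}. The $(1)\Rightarrow(3)$ direction is more delicate than it looks, because the Lipschitzness of $A\circ P^{-1}$ on $\widetilde{\Cal A}$ must be extracted from the $H^s$-smoothness of $\Cal A$ via interpolation rather than directly; alternatively, one may bypass this subtlety by applying Corollary~\ref{Cor1.main} directly to any pair of complete bounded trajectories (available by invariance alone, without using $(1)$) to obtain $(2)$, and then derive $(3)$ from $(2)$ by inverting the Lipschitz ODE \eqref{4.IFs} on the compact finite-dimensional invariant set $P_N\Cal A$.
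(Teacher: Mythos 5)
Your proposal does not match the paper's treatment: the paper does not prove Theorem \ref{Th4.rom} at all, but quotes it as Romanov's theory with references \cite{Rom1,Rom4,Z}; it is an abstract equivalence for the semilinear parabolic equation \eqref{4.eq} as such, valid in particular for $\omega=0$ and proved without any averaging. This is exactly where your central step fails. Your argument for $(3)\Rightarrow(2)$ invokes Corollary \ref{Cor1.main}, i.e.\ the paper's spatio-temporal averaging machinery, which exists only because $\omega\ne0$; and, as you yourself observe, it never uses hypothesis (3) (complete bounded trajectories through any two attractor points exist by strict invariance alone). So what you have written is not a proof of the implication $(3)\Rightarrow(2)$ of the stated equivalence; it is a re-derivation of Theorem \ref{Th4.main}, the paper's main result, for which Theorem \ref{Th4.rom} is supposed to be the tool and condition 3 the hypothesis being verified. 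If your argument were a valid proof of $(3)\Rightarrow(2)$ in the stated generality, it would show that condition 2 holds unconditionally, which is false: Lipschitz Man\'e projectors need not exist for semilinear parabolic equations (see \cite{EKZ,Z}), and the paper expects failure for \eqref{4.eq} with $\omega=0$ via \cite{Rom3}. A genuine proof of $(3)\Rightarrow(2)$ must actually exploit the backward Lipschitz bound $\|v(-t)\|\le Ce^{Kt}\|v(0)\|$ for differences $v$ of attractor trajectories: combined with the standard squeezing/cone estimates for the linearized parabolic equation (no spatial or temporal averaging), the inequality $\|Q_Nv(0)\|_H\ge\|P_Nv(0)\|_H$, with $Q_N=\mathrm{Id}-P_N$, forces backward decay of $v$ at a rate comparable to $e^{-\lambda_{N}|t|}$, which contradicts (3) once $\lambda_N$ exceeds the backward Lipschitz exponent; this is Romanov's argument and it is what the citation stands for.

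The direction $(1)\Rightarrow(3)$ is also not closed. To run Picard--Lindel\"of on the inertial form \eqref{4.IF} you need $P\circ A\circ P^{-1}$ to be Lipschitz on $\widetilde{\Cal A}$; but $A$ is unbounded, $P^{-1}$ is Lipschitz only into $H^2$, and interpolating this against the uniform $H^s$-bounds \eqref{4.abound} of the attractor yields only H\"older continuity with exponent strictly less than one, not the Lipschitz continuity you need, so the phrase ``via interpolation'' does not repair the step. Your proposed bypass (prove (2) directly from Corollary \ref{Cor1.main} and then deduce (3)) again uses $\omega\ne0$ and abandons proving the equivalence. A minor additional point: even in your $(3)\Rightarrow(2)$ computation, the estimate $\|v(0)\|_H\le C_N\|P_Nv(0)\|_H$ gives a Lipschitz inverse only in the $H$-metric, whereas the definition requires $P^{-1}:\widetilde{\Cal A}\to\Cal A\subset H^2$; this is easily fixed by the parabolic smoothing step the paper performs in the proof of Theorem \ref{Th4.main}, but it should be said. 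In short: either cite \cite{Rom1,Rom4,Z} for Theorem \ref{Th4.rom}, as the paper does, or reproduce Romanov's abstract argument; the averaging results of Section \ref{s1} belong to the verification of condition 3, not to the proof of the equivalence itself.
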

We apply this theorem in order to get the main result of our paper.
\begin{theorem}\label{Th4.main} Let the assumptions of Theorem \ref{Th3.stand} hold and let, in addition, the cross diffusion coefficient $\omega\ne0$. Then the attractor $\Cal A$ of the solution semigroup $S(t)$ associated with equation \eqref{4.eq} possesses a spectral Man\'e projector. In particular, the limit dynamics on the attractor is described by a system of ODEs of the form \eqref{4.IFs} with Lipschitz continuous vector field.
\end{theorem}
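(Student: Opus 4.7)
By the Romanov theorem (Theorem~\ref{Th4.rom}) it suffices to construct a spectral Lipschitz Man\'e projector, i.e.\ to exhibit an orthoprojector $P_N$ onto the first Fourier modes such that $P_N|_{\Cal A}$ is injective and its inverse $P_N^{-1}:P_N\Cal A\to\Cal A$ is Lipschitz in the $H^2$-norm. The plan is to apply Corollary~\ref{Cor1.main} to the equation of variations taken along an arbitrary pair of trajectories lying on $\Cal A$, and then upgrade the resulting $H$-estimate to an $H^2$-estimate by one step of parabolic smoothing.

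\textbf{Reduction to the linearized equation.} Fix $u_1,u_2\in\Cal A$ and, using strict invariance of $\Cal A$, extend each to a complete backward trajectory $\Psi_i\in C_b(\R_-,H^2)$ of \eqref{4.eq} with $\Psi_i(0)=u_i$. By Theorem~\ref{Th3.stand} these trajectories are bounded in $H^s$ for every $s$, and bootstrapping in the equation gives analogous control of $\Dt\Psi_i$. Writing
\begin{equation*}
f(\Psi_1,\bar\Psi_1)-f(\Psi_2,\bar\Psi_2) = -a(t,x)v - b(t,x)\bar v,\qquad v:=\Psi_1-\Psi_2,
\end{equation*}
with $a,b$ obtained by integrating $\partial_\Psi f$ and $\partial_{\bar\Psi} f$ along the segment joining $\Psi_2$ and $\Psi_1$, the difference $v$ solves \eqref{3.lincgl1} with $\|a\|_{C^1_b(\R\times\Bbb T^3)}+\|b\|_{C^1_b(\R\times\Bbb T^3)}\le K$ for some $K=K(f,\Cal A)$. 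Pick $N$ from the distinguished infinite sequence supplied by Corollary~\ref{Cor1.main} for this $K$; the corollary then yields
\begin{equation*}
\|v(t)\|_H\le C_N e^{-\theta_N t}\|P_N v(0)\|_H,\qquad t\le 0.
\end{equation*}

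\textbf{Conclusion and main obstacle.} Injectivity of $P_N$ on $\Cal A$ is immediate: $P_N u_1=P_N u_2$ forces the right-hand side to vanish and hence $v\equiv 0$. For the Lipschitz bound in $H^2$, evaluate the estimate at $t=-1$ and combine with the standard parabolic smoothing inequality $\|S(1)\phi_1-S(1)\phi_2\|_{H^2}\le C\|\phi_1-\phi_2\|_H$, valid on bounded subsets of $H^2$; this gives $\|u_1-u_2\|_{H^2}\le C'\|P_N(u_1-u_2)\|_{H^2}$ with constants independent of the (a priori non-unique) choice of backward extensions. Together with Theorem~\ref{Th4.rom} this delivers the spectral Lipschitz Man\'e projector, and the inertial form \eqref{4.IFs} has Lipschitz vector field because its nonlinear part composes the bounded linear $P_N$, the smooth $F$ restricted to the bounded set $\Cal A\subset H^2$, and the Lipschitz inverse $P_N^{-1}$. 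The only point requiring genuine care beyond the mechanics already developed in Section~\ref{s1} is verifying that the coefficients $a,b$ generated by trajectories on $\Cal A$ really do lie in $C^1_b$ jointly in $(t,x)$; this follows from the smoothness of $\Cal A$ in Theorem~\ref{Th3.stand} together with time-differentiation of \eqref{4.eq}, but it is the principal bookkeeping obstacle in converting the purely linear Corollary~\ref{Cor1.main} into the desired Man\'e-type estimate on the nonlinear attractor.
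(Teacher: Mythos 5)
Your proposal is correct, and it runs on exactly the same engine as the paper's proof: take two complete bounded trajectories on $\Cal A$, observe that their difference $v$ solves the linearized equation \eqref{3.lincgl1} with coefficients $a,b$ bounded in $C^1_b$ uniformly over the attractor (by the $H^s$-smoothness of $\Cal A$ from Theorem \ref{Th3.stand}), invoke Corollary \ref{Cor1.main}, and upgrade from $H$ to $H^2$ by one step of parabolic smoothing. The only difference is in the packaging. The paper verifies condition 3 of Theorem \ref{Th4.rom}: it first extends $S(t)\big|_{\Cal A}$ to a group of homeomorphisms via backward uniqueness (logarithmic convexity), then uses the chain $\|v(t)\|_{H^2}\le C_N e^{-\theta_N t}\|P_N v(0)\|_H\le C_N e^{-\theta_N t}\|v(0)\|_{H^2}$, $t\le 0$, to get Lipschitz continuity of the backward extension, and only then obtains the spectral projector through the Romanov equivalence. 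You instead keep $\|P_N v(0)\|_H$ on the right-hand side and read off the Man\'e property directly: injectivity of $P_N$ on $\Cal A$ and the Lipschitz bound $\|u_1-u_2\|_{H^2}\le C'\|P_N(u_1-u_2)\|_{H^2}$ via the estimate at $t=-1$ plus smoothing over one time unit. This direct route is a slight streamlining, since it bypasses both the backward-uniqueness/group-extension step and the appeal to Theorem \ref{Th4.rom}; note in particular that your opening and closing invocations of the Romanov theorem are actually redundant in your argument (the theorem's conclusion \emph{is} the spectral Lipschitz Man\'e projector you construct by hand), whereas in the paper's route Theorem \ref{Th4.rom} is the bridge from the backward Lipschitz property to the spectral projector. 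Your closing remarks on the $C^1_b$ bounds for $a,b$ (via time-differentiation of \eqref{4.eq} and smoothness of the attractor) and on the Lipschitz continuity of the vector field in \eqref{4.IFs} fill in details the paper leaves implicit, and they are correct.
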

\begin{proof} We will check condition 3 of the above theorem. The fact that $S(t):\Cal A\to\Cal A$ can be extended to a  group of homeomorphisms follows from the backward uniqueness theorem for semilinear parabolic equations (for instance, via the logarithmic convexity arguments, see e.g., \cite{BV,Z}), so we only need to check Lipschitz continuity.
\par
Let $\Psi_0^1,\Psi_0^2\in\Cal A$ be two points on the attractor. Since $\Cal A$ is invariant, there are two complete bounded trajectories $\Psi^i\in C_b(\R,H^2)$, $i=1,2$, belonging to the attractor such that $\Psi^i(0)=\Psi^i_0$, $i=1,2$. Let $v(t):=\Psi^1(t)-\Psi^2(t)$. Then this function satisfies the equation of variation for \eqref{4.eq} which has the form of \eqref{3.lincgl1}. Moreover, since the attractor is smooth, assumption \eqref{3.bounded} is satisfied uniformly with respect to $\Psi_0^1,\Psi_0^2\in\Cal A$. Thus, due to Corollary \ref{Cor1.main}, there are constants $N$, $C_N$ and $\theta_N$ (which are also uniform with respect to $\Psi_0^1,\Psi_0^2\in\Cal A$) such that
$$
\|v(t)\|_H\le C_Ne^{-\theta_N t}\|Pv(0)\|_H\le C_Ne^{-\theta_N t}\|v(0)\|_{H^2}.
$$
Applying the parabolic smoothing property to equation \eqref{3.lincgl1}, we finally arrive at
$$
\|v(t)\|_{H^2}\le C_Ne^{-\theta_N t}\|Pv(0)\|_H\le C_Ne^{-\theta_N t}\|v(0)\|_{H^2}.
$$
Thus, the desired backward Lipschitz continuity is verified and the theorem is proved.
\end{proof}
We now consider the particular case of the classical 3D cGL equation.

\begin{corollary} Let us consider the equation
\begin{equation}\label{4.cgll}
\Dt\Psi=(1+i\omega)\Dx\Psi+(1+i\beta)\Psi-(1+i\delta)\Psi|\Psi|^2.
\end{equation}
with periodic boundary conditions.
Assume that $\omega\ne0$ and any solution $\Psi(t)$ of this equation with $\Psi_0\in H^1$ exists globally in time $t\ge0$ (i.e., there is no finite time blow up of the $H^1$-norm). Then this equation possesses a smooth global attractor with spectral Lipschitz Man\'e projector.
\end{corollary}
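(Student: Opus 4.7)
The plan is to reduce the corollary to Theorem \ref{Th4.main} by verifying the only nontrivial hypothesis inherited from Theorem \ref{Th3.stand}, namely the $H^2$-dissipative estimate \eqref{4.dis}. The cross-diffusion assumption $\omega\ne0$ is explicit, so once \eqref{4.dis} is secured, Theorem \ref{Th4.main} directly produces the smooth global attractor equipped with a spectral Lipschitz Man\'e projector and the associated inertial form \eqref{4.IFs}. The entire task therefore reduces to promoting the standing assumption of $H^1$-global existence to a genuine dissipative estimate in $H^2$.

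First I would establish the $L^2$-dissipativity \eqref{4.hdis} in the usual way: testing \eqref{4.cgll} against $\bar\Psi$, taking real parts, and exploiting that the cubic term gives a coercive $\|\Psi\|_{L^4}^4$ which absorbs the linear production $\|\Psi\|_H^2$ via Young's inequality. This step is unconditional and uses neither $\omega\ne0$ nor the no-blowup hypothesis. Next, the assumption that $\|\Psi(t)\|_{H^1}$ does not blow up in finite time for $\Psi_0\in H^1$ together with the standard continuation argument extends the solution to a globally defined semigroup on $H^1$. An $H^1$-absorbing ball is then obtained by testing the equation against $-\Dx\bar\Psi$, taking real parts, and handling the cubic nonlinearity via the critical Sobolev embedding $H^1\hookrightarrow L^6$ in dimension three, combined with the already available $L^2$-absorbing ball and a uniform Gronwall inequality.

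The third step is a parabolic smoothing bootstrap from $H^1$ to $H^2$. Writing the equation in the form $\Dt\Psi=A\Psi+F(\Psi)$ with $A=(1+i\omega)\Dx$ sectorial on $L^2(\mathbb T^3)$, one expresses the solution via the variation-of-constants formula and uses that $F(\Psi)\in L^2$ uniformly on the $H^1$-absorbing ball (since $H^1\hookrightarrow L^6$ controls the cubic nonlinearity in $L^2$). Iterating the short-time smoothing estimate for the analytic semigroup $e^{tA}$ then lifts the absorbing property from $H^1$ to $H^2$ on any time window bounded away from zero, which yields \eqref{4.dis}. Theorem \ref{Th4.main} is then invoked to produce the spectral Lipschitz Man\'e projector and complete the argument.

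I expect the smoothing bootstrap to be the main obstacle, because the cubic nonlinearity is $H^1$-critical in three dimensions and the hypothesis supplies only qualitative absence of $H^1$-blowup rather than a quantitative $H^1$-bound. Still, the classical parabolic smoothing argument applies in this setting: the semigroup generated by $(1+i\omega)\Dx$ is analytic on $L^p(\mathbb T^3)$ for every $\omega\in\R$, and the $H^1$-absorbing ball established in the second step supplies precisely the quantitative control that the smoothing estimates require, so no new technique beyond the standard cGL machinery is needed.
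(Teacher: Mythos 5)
Your overall strategy (reduce everything to the dissipative estimate \eqref{4.dis} via the chain $L^2\to H^1\to H^2$ and then invoke Theorem \ref{Th4.main}) is the same as the paper's, and your steps 1 (the unconditional $L^2$ estimate \eqref{4.hdis}) and 3 (the $H^1\to H^2$ bootstrap using that the cubic nonlinearity maps bounded sets of $H^1$ into $L^2$ via $H^1\hookrightarrow L^6$) are fine and correspond to what the paper does. The gap is in your step 2. You claim an $H^1$-absorbing ball follows by testing \eqref{4.cgll} with $-\Dx\bar\Psi$, using $H^1\hookrightarrow L^6$, the $L^2$-absorbing ball and the uniform Gronwall lemma, with the no-blow-up hypothesis entering only through global existence. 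This energy estimate does not close for general $\delta$. After integration by parts the cubic term produces
\begin{equation*}
-\Ree\Bigl[(1+i\delta)\int_\Omega\bigl(2|\Psi|^2|\nabla\Psi|^2+\Psi^2(\nabla\bar\Psi)^2\bigr)\,dx\Bigr],
\end{equation*}
and when $|\delta|$ is large the indefinite part of size $\sqrt{1+\delta^2}\int|\Psi|^2|\nabla\Psi|^2$ is not dominated by the good-sign part. Estimating it by interpolation and absorbing $\|\Dx\Psi\|^2$ into the dissipation leaves a Gronwall coefficient of the type $\|\Psi\|_{H^1}^4$ (or $\|\Psi\|_{L^4}^8$), whereas the $L^2$ energy identity only gives $\int_t^{t+1}\|\Psi\|_{H^1}^2\,ds$ and $\int_t^{t+1}\|\Psi\|_{L^4}^4\,ds$ bounded; so the uniform Gronwall lemma does not apply and one is left with a Riccati-type inequality compatible with blow-up. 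Indeed, if your step 2 worked as stated, it would yield \emph{unconditional} $H^1$-dissipativity (hence rule out blow-up altogether), making the hypothesis of the corollary redundant and contradicting the evidence for $H^1$ blow-up in the self-focusing case cited in Remark \ref{Rem2.cond}.

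The missing ingredient is that the no-blow-up assumption must be used quantitatively in the passage from $L^2$ to $H^1$ dissipativity, not merely to continue solutions. This is exactly how the paper argues: absence of finite-time blow-up of the $H^1$-norm together with the dissipative $L^2$-estimate \eqref{4.hdis} implies $H^1$-dissipativity by the standard argument (global existence plus an absorbing set in a weaker norm plus smoothing, see \cite{Tem1}), and only then does subcriticality in $H^1$ give \eqref{4.dis} and allow Theorem \ref{Th4.main} to be applied. If you replace your uniform-Gronwall step by this argument (or restrict to parameter ranges, e.g. defocusing-type conditions, where the $H^1$ estimate does close), the rest of your proposal goes through.
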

\begin{proof}
Indeed, absence of finite time blow up for the $H^1$-norm together with dissipative $H$-estimate \eqref{4.hdis}  imply in a standard way the dissipativity in $H^1$, see e.g. \cite{Tem1}. In turn, since the classical cGL is subcritical in $H^1$, the dissipativity in $H^1$ implies the dissipative estimate \eqref{4.dis} and finishes the proof of the corollary.
\end{proof}
\begin{remark}As we have already mentioned, the condition $\omega\ne0$ is crucial for Theorem \ref{Th4.main}. Moreover, we expect that the theorem by itself is not true for $\omega=0$. Indeed, an explicit  counterexample of equation \eqref{4.eq} with $\omega=0$ and without normally hyperbolic inertial manifold has been constructed in \cite{Rom3}. On the other hand, the most difficult part in constructing counterexamples to existence of Lipschitz or log-Lipschitz Man\'e projections is {\it exactly} to break normal hyperbolicity, see \cite{EKZ,Z}, so we expect that the corresponding counterexample can be constructed by perturbing properly the example in \cite{Rom3}. We return to this somewhere else.
\end{remark}

\end{document}